\numberwithin{equation}{section}
\newtheorem{theorem}{Theorem}[section]
\newtheorem{lemma}[theorem]{Lemma}
\newtheorem{proposition}[theorem]{Proposition}
 \newtheorem{corollary}[theorem]{Corollary}
   \newtheorem{question}[theorem]{Question}
      \theoremstyle{definition}
     \newtheorem*{definition}{Definition}
     \newtheorem{example}[theorem]{Example}
     \theoremstyle{remark}
     \newtheorem{remark}[theorem]{Remark}
\newcommand{\Sym}{\mathop{\mathrm{Sym}}}
\newcommand{\Alt}{\mathop{\mathrm{Alt}}}
\newcommand{\Sch}{\mathop{\mathrm{Sch}}}
\newcommand{\Cay}{\mathop{\mathrm{Cay}}}
\newcommand{\SL}{\mathop{\mathrm{SL}}}
\newcommand{\Aff}{\mathop{\mathrm{Aff}}}
\newcommand{\PSL}{\mathop{\mathrm{PSL}}}
  \definecolor{mycolor}{rgb}{0.0,0.0,0.7}
  \definecolor{myred}{rgb}{0.75,0.0,0.16} 
  \definecolor{mygreen}{rgb}{0.0,0.4,0.16} 
  \definecolor{myviolet}{rgb}{1,0,1} 
   \definecolor{mypink}{rgb}{0.9,0,0.5}
	\newcommand{\MYhref}[3][black]{\href{#2}{\color{#1}{#3}}}%
\subjclass[2020]{Primary: 20F69, 05C48}  
\keywords{Expander graphs, Cayley graphs, Schreier graphs} 
\author[Luca Sabatini]{Luca Sabatini}
\address{\parbox{\linewidth}{Luca Sabatini, Mathematics Institute, Zeeman Building, University of Warwick\\
Coventry CV4\,7AL, United Kingdom \vspace{0.1cm}}}
\email{luca.sabatini@warwick.ac.uk, sabatini.math@gmail.com} 
\begin{document} 
 \title[Groups that produce expanders]{Groups that produce expander graphs} 

\maketitle 

\begin{abstract} 
We survey the
known
group properties that a sequence of finite groups or group actions
needs to satisfy to admit subsets of bounded cardinality producing expander Cayley or Schreier graphs.
We prove that an infinite amenable group
and solvable groups of bounded derived length
do not produce expander Schreier graphs,
generalizing
with easier proofs
results of Lubotzky and Weiss for Cayley graphs.
In particular,
the poor expansion properties of a group action cannot in general be detected
by looking at the abelian sections or at the representations above the stabilizer of a point.
\end{abstract}

\vspace{0.5cm}
\section{Introduction} 

Expander graphs are sequences of finite graphs that are at the same time sparse and highly connected,
and play a basic role in various areas of mathematics and computer science \cite{HLW06,Lub12}.
Their existence follows by the probabilistic method,
but explicit constructions are much more difficult,
and many known examples involve a sequence of finite groups and their Cayley graphs.

Let $(G_n)_{n \geq 1}$ be a sequence of finite groups.\footnote{
If not explicitly stated otherwise,
when $(X_n)_{n \geq 1}$ is a sequence of finite graphs or groups or sets,
we will always assume that $|X_n| \to \infty$ when $n \to \infty$.
}
In 1993, Lubotzky and Weiss \cite{LW93} studied the group properties that $(G_n)_{n \geq 1}$ needs to satisfy to admit
a sequence of symmetric generating sets $(S_n)_{n \geq 1}$ of bounded cardinality
such that $(\Cay(G_n,S_n))_{n \geq 1}$ are expanders.
They famously asked if being an expander Cayley graph is a {\itshape group property},
namely whether $(\Cay(G_n,S_n))_{n \geq 1}$ being expanders implies the same for $(\Cay(G_n,T_n))_{n \geq 1}$,
where $(T_n)_{n \geq 1}$ is any other sequence of generating sets of bounded cardinality.
The answer turned out to be negative \cite{ALW01},
but it still makes sense to consider the original ``best case'' situation,
which is a group property by definition:

\begin{definition}
Let $(G_n)_{n \geq 1}$ be a sequence of finite groups.
We say the $(G_n)_{n \geq 1}$ is an {\itshape expanding sequence} (of groups)
if there exists a sequence $(S_n)_{n \geq 1}$ of symmetric generating sets of bounded cardinality
such that
$$ ( \, \Cay(G_n,S_n) \, )_{n \geq 1} $$
are expander graphs.
\end{definition}

With this terminology, the monumental work in \cite{KLN06,BGT11}
says that the sequence of all nonabelian finite simple groups is expanding.
For comparison, the ``random case'' is studied in several important papers, most notably \cite{BG08,BGGT15}.
Here a crucial role is played by the so-called {\itshape quasirandom groups},
introduced by Gowers \cite{Gow08} and recently characterized in group theoretical terms
by M. Barbieri and the author \cite{BS25}.
Finally, in the ``worse case'' scenario, the main known result remains \cite{BG10}
which shows that $(\SL_2(\mathbb{F}_p))_{p \in \mathcal{P}}$ is uniformly expanding
for any sequence of generating sets.\footnote{
Here $\mathcal{P}$ is an infinite set of primes.
Very recently, O. Becker and E. Breuillard have announced a generalization of this result, see
\texttt{\MYhref[mypink]{https://sites.google.com/view/orenbecker/upcoming-preprints?authuser=0\#h.ym9dnhs7xopf}{sites.google.com/view/orenbecker/upcoming-preprints}}.
 }\\

In this paper, we are interested in negative results.
We remark that there are very few available tools to prevent that a given sequence of finite groups is expanding -
informally, an expanding sequence has to be ``far from abelian'',
and the following question is open:

\begin{question} \label{quest1}
Let $(G_n)_{n \geq 1}$ be a sequence of finite groups
which can be generated by $d$ elements for some fixed $d \geq 2$
and satisfy the conditions in \cite{LW93} and \cite{HRV93}.
Is $(G_n)_{n \geq 1}$ an expanding sequence of groups?
\end{question}

The conditions above concern the algebraic structure of the groups,
controlling in particular the abelian sections and the degrees of the irreducible representations
(see Theorems \ref{thLWineq} and \ref{thHRV} below).
The answer to Question \ref{quest1} is most likely negative in general.
To build intuition in this direction, we will extend such ideas to the broader setting of {\itshape Schreier graphs},
and show the corresponding answer to be negative therein.
For a group $G$ acting transitively on a finite set $\Omega$, and $S = S^{-1} \subseteq G$,
the Schreier graph $\Sch(G \circlearrowright \Omega,S)$ is the graph with vertex set $\Omega$ and edges
$(\omega,\omega^s)$ for any $\omega \in \Omega$ and $s \in S$.

 \begin{figure}[h]
	\centering
	\includegraphics[width=5cm]{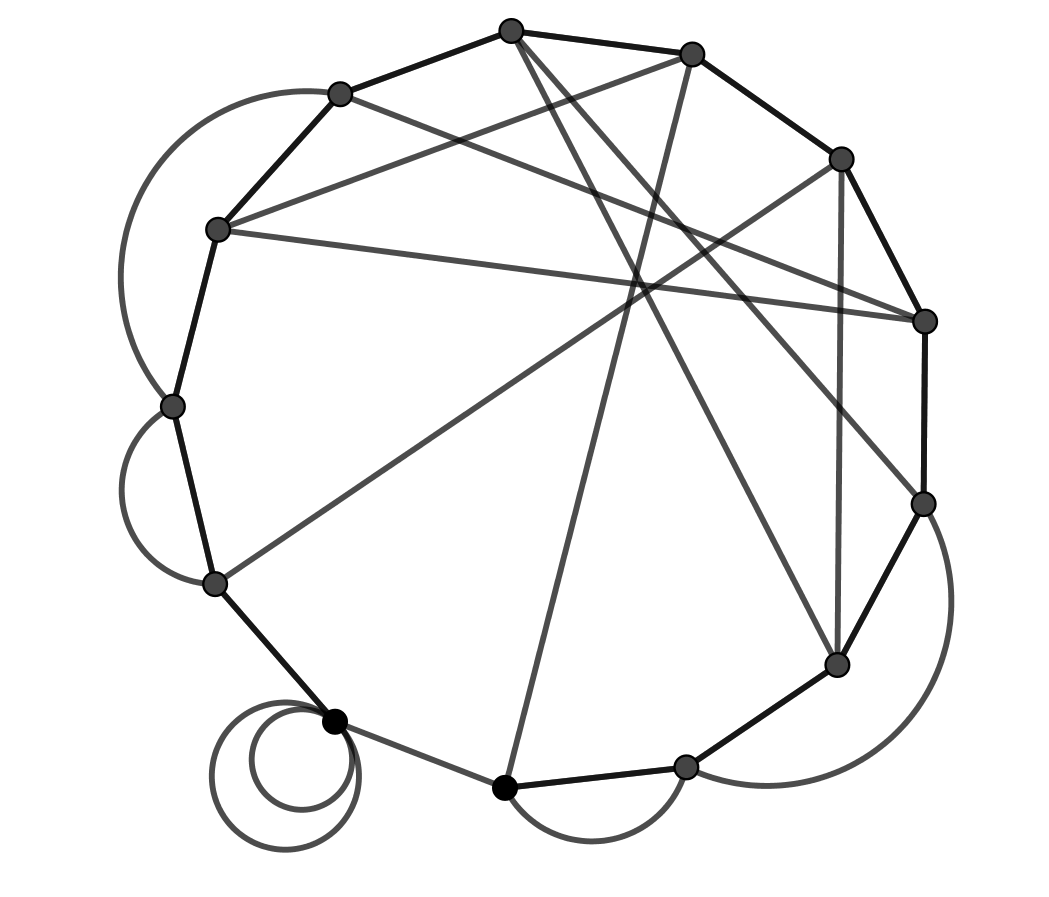}
	\caption{$\Sch( \, \Aff_1(13) \circlearrowright \mathbb{F}_{13} \, , \, \{ \pm 1, \cdot 2^{\pm 1} \} \, )$.}
	\end{figure}
	
	We would like to make an example.
Let $p$ be a prime and let $\Aff_1(p) \cong C_p \rtimes C_{p-1}$ be the affine group.
Then $\Aff_1(p)$ acts doubly transitively on $\mathbb{F}_p$,
and the stabilizer of a point is a maximal subgroup isomorphic to $C_{p-1}$.
There is no nontrivial section above the stabilizer,
and the double transitivity implies that the permutation representation splits nicely
into two irreducibles of degrees $1$ and $p-1$.
From these facts,
$(\Aff_1(p) \circlearrowright \mathbb{F}_p)_{p \geq 3}$
might appear
(as we will see, deceptively!)
to be a strong candidate to generate expander Schreier graphs.

\begin{definition}
Let $(G_n)_{n \geq 1}$ be a sequence of groups acting transitively on the finite sets $(\Omega_n)_{n \geq 1}$.
We say the $(G_n \circlearrowright \Omega_n)_{n \geq 1}$ is an {\itshape expanding sequence} (of group actions)
if there exists a sequence $(S_n)_{n \geq 1}$ of symmetric subsets of bounded cardinality such that
$$ ( \, \Sch(G_n \circlearrowright \Omega_n,S_n) \, )_{n \geq 1} $$
are expander graphs.
\end{definition}

\begin{remark}
Since a Schreier graph is a graph quotient of the corresponding Cayley graph,
it is easy to see that if $(G_n)_{n \geq 1}$ is an expanding sequence of groups,
then $(G_n \circlearrowright \Omega_n)_{n \geq 1}$ is always an expanding sequence of group actions.
\end{remark}

   Several theorems about expansion in Cayley graphs come
   from corresponding results on a fixed infinite group and a fixed generating set.
   In particular, \cite[Th. 3.1]{LW93} shows that an infinite amenable group does not produce
   expander Cayley graphs as finite quotients.
  In Section \ref{sec3}, we remark that a general version for actions actually holds:
   
   \begin{theorem} \label{thAmenableSG}
  Let $G$ be an infinite amenable group acting on the finite sets $(\Omega_n)_{n \geq 1}$.
  Fix a finite symmetric set $S \subseteq G$.
  Then $(\Sch(G \circlearrowright \Omega_n,S))_{n \geq 1}$ is not a sequence of expanders.
  \end{theorem}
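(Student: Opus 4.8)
The plan is to use amenability of $G$ in its Følner form and to push a Følner set of $G$ forward onto each $\Omega_n$, producing --- uniformly in $n$ --- almost‑invariant mean‑zero vectors in $\ell^2(\Omega_n)$; this forces the spectral gap of the Schreier Laplacian to $0$, hence, by the Cheeger inequality, the Cheeger constants of the Schreier graphs to $0$, so they cannot be expanders. One should note at the outset that the tempting shortcut --- replace the action by the finite quotient $G_n := G/\ker(G \circlearrowright \Omega_n)$, apply \cite[Th.~3.1]{LW93} to see that $\Cay(G_n, \overline{S})$ is not an expander, and then pass to $\Omega_n$ --- does not work, because a graph quotient of a non‑expander may perfectly well be an expander. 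So the test vectors must be built on $\Omega_n$ directly.

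Fix $\varepsilon > 0$. By amenability, choose a finite right‑Følner set $F \subseteq G$ with $|Fs \mathbin{\triangle} F| < \varepsilon^2 |F|$ for every $s \in S$ (one may take $F$ two‑sided Følner, which makes the sidedness a non‑issue). Fix $n$, pick a base point $\omega_0 \in \Omega_n$, and push $\mathbf 1_F$ forward along the orbit map $g \mapsto \omega_0^g$: set $\mu_n(\omega) := \#\{\, g \in F : \omega_0^g = \omega \,\}$. Then $\mu_n \ge 0$, $\sum_\omega \mu_n(\omega) = |F|$, $\mu_n$ is supported on at most $|F|$ points, and translating $F$ by $s \in S$ translates $\mu_n$ by the permutation of $\Omega_n$ induced by $s$, whence $\|\, s\cdot\mu_n - \mu_n \,\|_1 \le |Fs \mathbin{\triangle} F| < \varepsilon^2 |F|$. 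Putting $f_n := \sqrt{\mu_n} \in \ell^2(\Omega_n)$ and using the elementary bound $|\sqrt{a} - \sqrt{b}|^2 \le |a - b|$ termwise, we get $\|\, \rho_n(s) f_n - f_n \,\|_2^2 \le \|\, s\cdot\mu_n - \mu_n \,\|_1 < \varepsilon^2 |F|$ for all $s \in S$, while $\|f_n\|_2^2 = \sum_\omega \mu_n(\omega) = |F|$.

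The last step, where the hypothesis $|\Omega_n| \to \infty$ finally enters, is to correct $f_n$ to a genuinely mean‑zero vector without harming almost‑invariance. Since $f_n$ is supported on at most $|F|$ points, Cauchy--Schwarz gives $\big(\sum_\omega f_n(\omega)\big)^2 \le |F| \cdot \|f_n\|_2^2 = |F|^2$, so the projection $f_n^0$ of $f_n$ onto the mean‑zero subspace $\ell^2_0(\Omega_n)$ satisfies $\|f_n^0\|_2^2 = \|f_n\|_2^2 - |\Omega_n|^{-1}\big(\sum_\omega f_n(\omega)\big)^2 \ge |F| - |F|^2/|\Omega_n| > |F|/2$ as soon as $|\Omega_n| > 2|F|$, i.e. for all large $n$. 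Because $\rho_n(s)$ fixes constants, $\|\, \rho_n(s) f_n^0 - f_n^0 \,\|_2 = \|\, \rho_n(s) f_n - f_n \,\|_2 < \varepsilon\sqrt{|F|} < \sqrt{2}\,\varepsilon\,\|f_n^0\|_2$ for such $n$. A standard Rayleigh‑quotient computation with $f_n^0$ then bounds the spectral gap of the Schreier Laplacian $I - |S|^{-1}\sum_{s \in S}\rho_n(s)$ by $\tfrac{1}{2|S|}\sum_{s \in S}\|\, \rho_n(s) f_n^0 - f_n^0 \,\|_2^2 / \|f_n^0\|_2^2 < \varepsilon^2$ for all large $n$. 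Since $\varepsilon$ was arbitrary, the spectral gaps tend to $0$; as $|S|$ is fixed and $|\Omega_n| \to \infty$, the Cheeger inequality shows that $(\Sch(G \circlearrowright \Omega_n, S))_{n \ge 1}$ is not a family of expanders.

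I expect the genuine content to lie in the first and third paragraphs: the observation that one cannot reduce to the Cayley case for the finite quotients $G_n$ (quotients of non‑expanders need not be non‑expanders), so the argument has to take place on $\Omega_n$; and the use of $|\Omega_n| \to \infty$ to keep $\|f_n^0\|_2$ bounded below, the point being that the pushforward of a fixed finite set is automatically supported on a negligible fraction of a huge $\Omega_n$ and so cannot be close to the uniform distribution --- which is exactly what yields a non‑trivial mean‑zero almost‑invariant vector. The remaining ingredients --- existence of Følner sets, the $\ell^1 \to \ell^2$ square‑root trick, and the Cheeger inequality --- are entirely routine.
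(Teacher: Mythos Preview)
Your proof is correct and follows the classical ``almost-invariant vectors'' route: pick a F\o lner set $F$ in $G$, push its indicator forward to each $\Omega_n$ along an orbit map, take square roots to pass from $\ell^1$ to $\ell^2$, subtract the mean (this is where $|\Omega_n|\to\infty$ enters, since the support of the pushforward has size at most $|F|$), and bound the spectral gap via the Rayleigh quotient and Cheeger. This is essentially the Lubotzky--Weiss argument for Cayley graphs transported to Schreier graphs, and is precisely the method the paper announces it is \emph{avoiding}. The paper proceeds purely combinatorially instead: it lets $G$ act on the disjoint union $\Omega=\bigsqcup_n\Omega_n$, invokes the fact that every action of an amenable group is amenable (Proposition~\ref{propAmenable}), and applies the F\o lner criterion for \emph{actions} (Lemma~\ref{lemFol}) to get finite $X_i\subseteq\Omega$ with $|X_iS\setminus X_i|/|X_i|\to 0$; a short numerical pigeonhole (Lemma~\ref{lemTec}) then locates a single component $\Omega_{k_i}$ on which the ratio is still small, directly bounding $h_{ver}$ with no spectral theory and no Cheeger inequality. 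Your approach is self-contained with standard analytic tools and makes the role of $|\Omega_n|\to\infty$ completely explicit; the paper's approach is shorter, stays with sets rather than functions throughout, and packages the passage from the infinite object to the finite pieces into an elementary lemma, at the cost of quoting the amenable-actions formalism from \cite{Bar18}.
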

 
 A modern introduction to amenable groups can be found in \cite{Bar18}.
 An important quality of \cite{Bar18} is that it treats {\itshape amenable actions} in a consistent fashion -
 in fact, the crucial reason behind the validity of Theorem \ref{thAmenableSG} is that amenability
 behaves well with respect to passing to actions \cite[Prop. 2.12]{Bar18}.
 This allows us to give a simpler proof of the original result for Cayley graphs,
 without using approximately invariant functions,
 which also holds for Schreier graphs.
 The obstacle
 (also present in \cite{LW93})
 that there is no infinite set in the hypotheses of Theorem \ref{thAmenableSG},
 will be overcome by considering the disjoint union of the $\Omega_n$'s and a simple numerical lemma.
 
 Using the amenability of the free solvable groups, we obtain:
 
\begin{theorem} \label{thMain}
  Let $(G_n)_{n \geq 1}$ be a sequence of finite solvable groups of bounded derived length.
  Then any sequence of actions $(G_n \circlearrowright \Omega_n)_{n \geq 1}$ is not expanding.
\end{theorem}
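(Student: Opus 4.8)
The plan is to reduce the statement to Theorem \ref{thAmenableSG} by pushing every action in the sequence through a single infinite amenable group — a free solvable group of bounded rank and bounded derived length, which is the reason the free solvable groups enter. Suppose, for contradiction, that $(G_n \circlearrowright \Omega_n)_{n \ge 1}$ is expanding: fix symmetric subsets $S_n \subseteq G_n$ with $|S_n| \le k$ for all $n$ such that $(\Sch(G_n \circlearrowright \Omega_n, S_n))_{n \ge 1}$ is a family of expanders, and let $\ell$ bound the derived lengths of the $G_n$. Let $F = F_k / F_k^{(\ell)}$ be the free solvable group of rank $k$ and derived length at most $\ell$, with distinguished generators $x_1, \dots, x_k$. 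Two properties of $F$ are all the argument needs: it is \emph{infinite} (already $F/F' \cong \mathbb{Z}^k$ is), and it is \emph{amenable}, being solvable.

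Next I would exhibit each $\Sch(G_n \circlearrowright \Omega_n, S_n)$ as a Schreier graph of $F$. Padding with the identity if necessary, write $S_n = \{s_{n,1}^{\pm 1}, \dots, s_{n,k}^{\pm1}\}$ for suitable $s_{n,1}, \dots, s_{n,k} \in G_n$. The permutations of $\Omega_n$ induced by these elements define an action of the free group $F_k$ on $\Omega_n$ via $x_i \mapsto s_{n,i}$; its image lies in $\langle S_n \rangle \le G_n$, which is solvable of derived length at most $\ell$, so $F_k^{(\ell)}$ acts trivially and the action descends to an action $F \circlearrowright \Omega_n$. By construction $\Sch(F \circlearrowright \Omega_n, \{x_1^{\pm1}, \dots, x_k^{\pm1}\})$ differs from $\Sch(G_n \circlearrowright \Omega_n, S_n)$ only by self-loops at some vertices, coming from the padding. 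Self-loops lie in no edge boundary and do not alter vertex counts, so they leave the edge-expansion constant unchanged while keeping the degrees bounded; hence $(\Sch(F \circlearrowright \Omega_n, \{x_i^{\pm1}\}))_{n \ge 1}$ is still a family of expanders.

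It remains to contradict this. Applying Theorem \ref{thAmenableSG} to the infinite amenable group $G = F$, the fixed finite symmetric set $\{x_1^{\pm1}, \dots, x_k^{\pm1}\} \subseteq F$, and the sequence of finite $F$-sets $(\Omega_n)_{n \ge 1}$ (for which $|\Omega_n| \to \infty$), we get that $(\Sch(F \circlearrowright \Omega_n, \{x_i^{\pm1}\}))_{n \ge 1}$ is \emph{not} a family of expanders, the desired contradiction. The real content sits in Theorem \ref{thAmenableSG}; the only new point is that a uniform bound on the derived length, together with the uniform bound on $|S_n|$ furnished by the expansion hypothesis, forces all the actions to factor through one fixed infinite amenable group. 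The steps deserving a line of justification rather than none — and which I would spell out — are that the relevant free solvable group is infinite and amenable, and that the identity-padding only introduces self-loops, which are harmless for expansion; I expect no difficulty beyond this bookkeeping.
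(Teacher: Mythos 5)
Your proposal is correct and follows essentially the same route as the paper: factor every action through the fixed free solvable group $F_d/F_d^{(\ell)}$ (the paper invokes its universal property, you equivalently observe that $F_k^{(\ell)}$ acts trivially so the $F_k$-action descends), note that this group is infinite and amenable, and conclude by Theorem \ref{thAmenableSG}. Your extra bookkeeping about identity-padding, symmetrization and self-loops is harmless and only makes explicit what the paper leaves implicit.
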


The Cayley graph case is \cite[Cor. 3.3]{LW93}.
Theorem \ref{thMain} implies that the sequence of group actions
$(\Aff_1(p) \circlearrowright \mathbb{F}_p)_{p \geq 3}$
is not expanding,
so the poor expansion properties of a group action cannot in general be detected
by looking at the abelian sections or at the representations above the stabilizer of a point.
In contrast we observe that \cite[Cor. 3.3]{LW93} can be recovered by looking at the abelian sections (Corollary \ref{corBDL}),
even if the original proof is based on \cite[Th. 3.1]{LW93}.


\begin{remark}
 The idea for Theorem \ref{thMain} originally came to the author from two exercises in Helfgott's notes \cite[Ex. 3.10-3.11]{Hel19},
  where it is shown that the Schreier graphs
  $( \Sch( \Aff_1(p) \circlearrowright \mathbb{F}_{p} , \{ \pm 1, \cdot \lambda^{\pm 1} \} ) )_{p \geq 3}$
  are not expanders.
  This is a particular instance of Theorem \ref{thMain},
  because $\Aff_1(p)$ is metabelian and $\{ \pm 1, \cdot \lambda^{\pm 1} \}$ is a specific generating set.
  \end{remark}

\vspace{0.1cm}
\section{Expanding sequences of groups and actions} \label{sec2}

In this section we survey the known group properties satisfied by an expanding sequence of groups or group actions.
We present the results in a streamlined fashion that emphasizes their asymptotic nature.
We also refer the reader to Chapter 3 of the unpublished book of Lubotzky and \.Zuk \cite{LZ03}.

\subsection{Isoperimetric number, expander graphs} \label{sec1.1}

Let $\Gamma$ be a finite undirected graph (loops and multiple edges are allowed).
Given a set of vertices $X \subseteq V(\Gamma)$, the {\itshape vertex boundary} $\partial_{ver} X$
is the set of vertices at distance precisely $1$ from $X$.
The {\itshape (vertex) isoperimetric number} is defined by
  $$ h_{ver}(\Gamma) \> := \> \min_{|X| \leq |\Gamma|/2} \frac{|\partial_{ver} X|}{|X|} . $$ 
 
 It is clear that $h_{ver}(\Gamma) >0$ if and only if $\Gamma$ is connected.
 
 \begin{definition}
 Let $(\Gamma_n)_{n \geq 1}$ be a sequence of regular graphs of bounded degree.
 Then $(\Gamma_n)_{n \geq 1}$ are {\itshape expander graphs} if $h_{ver}(\Gamma_n) \geq \varepsilon$
 for some fixed $\varepsilon >0$ and all $n$.
 \end{definition}
 
 As with many fundamental concepts in mathematics, there are several definitions of expanders:
 one can work with the {\itshape edge boundary} (and so with the {\itshape Cheeger constant}),
 or with the spectral gap of the adjacency matrix or of the Laplacian.
 All these notions are equivalent for graphs of bounded degree,
 and the convenience of the vertex boundary is that it has a nice interpretation when $\Gamma$ is a Schreier graph.
 
 If $\Gamma = \Sch(G \circlearrowright \Omega,S)$ and $X \subseteq \Omega$, then
 $$ \partial_{ver}X \> = \> XS \setminus X , $$
 where
$$ XS \> = \> \{ x^s : x \in X , s \in S \} . $$
We observe that $\partial_{ver}X = \emptyset$ if and only if $X$ is a union of orbits of $\langle S \rangle$.

\begin{remark}
 One obvious condition for a sequence of finite groups $(G_n)_{n \geq 1}$ to be expanding
 is that each $G_n$ can be generated by $d$ elements for some fixed $d \geq 2$.
 For a sequence of actions $(G_n \circlearrowright \Omega_n)_{n \geq 1}$,
 each $G_n$ needs to contain a {\itshape transitive subgroup} that can be generated by $d$ elements for some fixed $d$.
 \end{remark}

 \subsection{Abelian sections} 
 
For a group $G$, we write $G'=[G,G]$ to denote the commutator subgroup.
The quotient $G/G'$ is the largest abelian quotient of $G$.

\begin{theorem}[Th. 3.6 in \cite{LW93}] \label{thLWineq}
Let $(G_n)_{n \geq 1}$ be an expanding sequence of groups.
Then there exists a constant $c$ such that for every $n$ and $H \leqslant G_n$ we have
$$ |H:H'| \> \leq \> c^{\, |G_n:H|} . $$
\end{theorem}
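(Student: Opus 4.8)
The plan is to feed the expansion hypothesis into a Schreier graph of $G_n$ attached to $H$ and to show that, if $|H:H'|$ were large compared with $[G_n:H]$, that graph would contain a sparse cut. Fix $n$ and $H\le G_n$, put $k=[G_n:H]$ and $m=|H:H'|$, and choose symmetric generating sets $S_n$ with $|S_n|\le d$ and $h_{ver}(\Cay(G_n,S_n))\ge\varepsilon>0$ for all $n$ (these exist since $(G_n)_{n\ge1}$ is expanding). Since a Schreier graph is a graph quotient of the corresponding Cayley graph, the graph $\Gamma'=\Sch(G_n\circlearrowright G_n/H',S_n)$ --- which has $km$ vertices and degree at most $d$ --- still satisfies $h_{ver}(\Gamma')\ge\varepsilon$; this is the graph in which I look for the sparse cut.

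The structural point is that $\Gamma'$ is a regular covering of $\Gamma=\Sch(G_n\circlearrowright G_n/H,S_n)$ with abelian deck group $A=H/H'$: since $H'\trianglelefteq H$, the group $H/H'$ acts freely on the vertex set $G_n/H'$ by left translation, commuting with all $S_n$-edges, and the quotient is exactly $\Gamma$. Trivialising this covering over a spanning tree of $\Gamma$ identifies $V(\Gamma')$ with $V(\Gamma)\times A$; the monodromies of the at most $dk/2$ non-tree edges of $\Gamma$ then form a symmetric generating set $\overline U$ of $A$ of size $t\le dk$ (it generates $A$ because $\Gamma'$ is connected), and $\overline U$ is nothing but the image in $A$ of a Reidemeister--Schreier generating set of $H$. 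Any $X_0\subseteq A$ with $|X_0|\le m/2$ now lifts to $X=V(\Gamma)\times X_0\subseteq V(\Gamma')$, and --- tree edges carrying trivial monodromy --- one checks $\partial_{ver}X\subseteq V(\Gamma)\times\partial_{ver}X_0$, the latter boundary being taken in $\Cay(A,\overline U)$; hence
$$ h_{ver}(\Gamma')\ \le\ h_{ver}\!\bigl(\Cay(A,\overline U)\bigr), $$
so that $h_{ver}\!\bigl(\Cay(A,\overline U)\bigr)\ge\varepsilon$ for a Cayley graph of an abelian group of order $m$ with $t\le dk$ generators.

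The remaining ingredient --- and the only delicate part --- is a quantitative form of the fact that Cayley graphs of finite abelian groups with few generators expand poorly: for any symmetric generating set $T$ of size $t$ of a finite abelian group $A$ of order $m$, $h_{ver}(\Cay(A,T))\le C_0\,m^{-1/t}$ with an \emph{absolute} constant $C_0$. Granting this, $\varepsilon\le C_0 m^{-1/t}\le C_0 m^{-1/(dk)}$, hence $m^{1/(dk)}\le C_0/\varepsilon$ and
$$ |H:H'|\ =\ m\ \le\ (C_0/\varepsilon)^{dk}\ =\ \bigl((C_0/\varepsilon)^{d}\bigr)^{[G_n:H]}, $$
that is, $|H:H'|\le c^{\,[G_n:H]}$ with $c=c(\varepsilon,d)$, as required.

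That the constant $C_0$ be independent of $t$ is exactly what one must work for: a crude diameter or ball-growth estimate for $\Gamma'$ gives only $h_{ver}(\Cay(A,\overline U))\le O(t\,m^{-1/t})$, equivalently $|H:H'|\le\exp O([G_n:H]\log[G_n:H])$, which falls short of $c^{[G_n:H]}$. The sharp bound comes from replacing an isoperimetrically round set by a \emph{slab}: whenever $A$ admits a cyclic quotient of order $\ge m^{1/t}$ on which all generators but one die, the preimage of an interval has density near $1/2$ and only two boundary layers, of density $O(m^{-1/t})$; the general case is reduced to this by geometry of numbers applied to the relation lattice $\ker(\mathbb Z^{\,t}\to A)$. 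By contrast, the soft fact that abelian groups are amenable only yields $h_{ver}(\Gamma')\to0$, not the numerics. (One should also note that the asserted inequality is automatic unless $[G_n:H]$, and hence $t$, is small: for $[G_n:H]\ge\log_e|G_n|$ it already follows from $|H:H'|\le|H|\le|G_n|$ once $c\ge e$.)
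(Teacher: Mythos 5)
The paper does not actually prove this statement: it is quoted verbatim as Theorem 3.6 of \cite{LW93} (and the paper only remarks that Theorem \ref{thSabIneq} is proved ``essentially as in'' it), so there is no in-text proof to compare against. Your argument is correct and is essentially the standard Lubotzky--Weiss argument: pass to $\Sch(G_n\circlearrowright G_n/H',S_n)$, which inherits $h_{ver}\ge\varepsilon$ as a quotient of the Cayley graph; view it as a covering of $\Sch(G_n\circlearrowright G_n/H,S_n)$ with abelian deck group $H/H'$ generated by the $\le d\,|G_n:H|$ Schreier generators; lift a small-boundary subset of $H/H'$ fiberwise to get $h_{ver}(\Gamma')\le h_{ver}(\Cay(H/H',\overline U))$; and invoke quantitative non-expansion of abelian Cayley graphs. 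The one piece you assert rather than prove --- $h_{ver}(\Cay(A,T))\le C_0\,|A|^{-1/|T|}$ with $C_0$ absolute --- is true and provable exactly by the method you name (a short vector in the dual of the relation lattice, by Minkowski or pigeonhole on characters), but your intermediate formulation should be adjusted: one cannot in general find a cyclic quotient on which all generators but one \emph{die} (take $A$ cyclic of prime order); what the lattice argument actually produces is a surjection $A\to\mathbb Z/n$ under which every generator has image of absolute value at most $n\cdot|A|^{-1/t'}$ for some $t'\le t$, and the preimage of an interval of length $\lfloor n/2\rfloor$ then has relative vertex boundary $O(|A|^{-1/t})$, which is all you need.
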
 


When $G$ is a (non necessarily finite) group, the function
$$ ab_k(G) \> := \> \sup_{|G:H| \leq k} |H:H'| $$
is the object of study of \cite{Sab23}, where it is named {\itshape abelianization growth}.
Theorem \ref{thLWineq} says that, if $(G_n)_{n \geq 1}$ is an expanding sequence of groups,
then $ab_k(G_n) \leq c^k$ for all $k$ and $n$,
for some constant $c$ possibly depending on the sequence.
The sequences of groups $(G_n)_{n \geq 1}$ such that
$ab_k( G_n )$ can be bounded by a function in $k$ (and not in $n$) are said to have the property
{\itshape BAb} ({\itshape Bounded Abelianizations}),
in analogy with the property {\itshape FAb} ({\itshape Finite Abelianizations}) which is interesting for infinite groups.
We now remark that solvable groups of bounded derived length do not even have BAb,
giving an alternative proof of the fact that they are not expanding \cite[Cor. 3.3]{LW93}.

\begin{lemma} \label{lemSolvBab}
Let $G$ be a finite solvable group of derived length $\ell \geq 1$.
Suppose that $ab_k(G) \leq f(k)$ for all $k$. 
Then $|G|$ can be bounded by a function in $\ell$ and $f$.
\end{lemma}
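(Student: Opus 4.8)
The plan is to argue by induction on the derived length. Precisely, I would prove that there is a function $\Psi(\ell,f)$, nondecreasing in $\ell$, such that every finite solvable group $G$ of derived length at most $\ell$ with $ab_k(G) \le f(k)$ for all $k$ satisfies $|G| \le \Psi(\ell,f)$. The base case $\ell \le 1$ is immediate: the trivial group has order $1$, and for a nontrivial abelian $G$ we have $|G| = |G:G'| = ab_1(G) \le f(1)$. For the inductive step, take $G$ of derived length $\ell \ge 2$ and let $A := G^{(\ell-1)}$ be the last nontrivial term of the derived series, a nontrivial abelian normal subgroup, noting that $G/A$ has derived length at most $\ell-1$. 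The strategy is to bound $[G:A] = |G/A|$ by applying the inductive hypothesis to $G/A$, and then to bound $|A|$ itself directly from the hypothesis on $G$.

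For the first bound I would observe that the condition $ab_k \le f(k)$ passes to quotients: if $N \trianglelefteq G$ and $\bar H \le G/N$ has index at most $k$, write $\bar H = H/N$ with $N \le H \le G$ and $[G:H] \le k$; then $\bar H' = H'N/N$, so by the isomorphism theorems $|\bar H : \bar H'| = [H : H'N] \le [H:H'] \le ab_k(G) \le f(k)$. Hence $ab_k(G/N) \le f(k)$ for all $k$, and the inductive hypothesis applied to $G/A$ gives $|G/A| \le \Psi(\ell-1,f)$. For the second bound, set $M := \Psi(\ell-1,f)$; since $A$ is abelian, $|A| = |A:A'|$, and $A$ is a subgroup of $G$ of index $[G:A] = |G/A| \le M$, so by monotonicity of $j \mapsto ab_j(G)$ we get $|A| \le ab_{[G:A]}(G) \le ab_M(G) \le f(M)$. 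Therefore $|G| = [G:A]\,|A| \le M \cdot f(M) =: \Psi(\ell,f)$, which depends only on $\ell$ and $f$, completing the induction.

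I do not expect a serious obstacle here: the single point that makes the argument work is the remark that passing to a quotient does not increase $ab_k$, which is what allows the induction to be carried out along the derived series; everything else is routine manipulation of indices and the isomorphism theorems. One could instead try to induct ``upwards'' via $G' = G^{(1)}$, bounding $ab_k(G')$ by relating a finite-index subgroup $H \le G'$ to its normal core in $G$ — which has index in $G$ bounded in terms of $[G:H]$, since $H$ has boundedly many $G$-conjugates — but this route is more cumbersome. The only thing to monitor throughout is that each intermediate estimate depends on $\ell$ and $f$ alone and never on $G$ itself, which is guaranteed by the shape of the induction.
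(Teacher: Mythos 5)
Your proof is correct and takes essentially the same route as the paper: induction on the derived length, splitting $|G| = |G:G^{(\ell-1)}|\,|G^{(\ell-1)}|$, bounding the abelian bottom term via $ab_{|G:G^{(\ell-1)}|}(G)$ and the quotient $G/G^{(\ell-1)}$ by the inductive hypothesis, using that $ab_k$ does not increase when passing to quotients. You merely spell out more explicitly the quotient inequality and the quantitative bound $\Psi(\ell,f)=M\cdot f(M)$, which the paper leaves implicit.
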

\begin{proof}
We work by induction on $\ell$.
If $\ell =1$, then $|G| = ab_1(G) \leq f(1)$.
Let $\ell \geq 2$ and let $G^{(i)}$ be the $i$-th term of the derived series of $G=G^{(0)}$.
Write $|G| = |G : G^{(\ell-1)}| |G^{(\ell-1)}|$.
Now $G^{(\ell-1)}$ is abelian, and so
$$ |G^{(\ell-1)}| \leq 
ab_{|G:G^{(\ell-1)}|} (G) \leq f(|G:G^{(\ell-1)}|) . $$
The group $G/G^{(\ell-1)}$ has derived length $\ell-1$ and satisfies
$$ ab_k(G/G^{(\ell-1)}) \leq ab_k(G) \leq f(k) . $$
By induction, $|G:G^{(\ell-1)}|$ can be bounded by a function in $\ell$ and $f$.
The proof follows.
\end{proof}

\begin{corollary} \label{corBDL}
Let $(G_n)_{n \geq 1}$ be a sequence of finite solvable groups of bounded derived length.
Then $(G_n)_{n \geq 1}$ does not have BAb, and in particular is not an expanding sequence.
\end{corollary}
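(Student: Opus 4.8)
The plan is to deduce this directly from Lemma~\ref{lemSolvBab}, using only the standing convention that $|G_n| \to \infty$. First I would establish the failure of BAb by contradiction. Suppose $(G_n)_{n \geq 1}$ has BAb, so that there is a single function $f$ with $ab_k(G_n) \leq f(k)$ for all $k \geq 1$ and all $n$. Let $\ell$ be a common upper bound for the derived lengths of the $G_n$. Then for each $n$ the hypotheses of Lemma~\ref{lemSolvBab} hold with this fixed $\ell$ and this fixed $f$, so $|G_n|$ is bounded by the quantity $N(\ell,f)$ furnished by the lemma. The crucial point is that $N(\ell,f)$ depends only on $\ell$ and $f$, not on $n$, so $(|G_n|)_{n \geq 1}$ is bounded — contradicting $|G_n| \to \infty$. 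Hence $(G_n)_{n \geq 1}$ does not have BAb.

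For the ``in particular'' clause I would invoke Theorem~\ref{thLWineq}. If $(G_n)_{n \geq 1}$ were an expanding sequence, that theorem provides a constant $c$ with $|H:H'| \leq c^{\,|G_n:H|}$ for every $n$ and every $H \leqslant G_n$; taking the supremum over subgroups of index at most $k$ gives $ab_k(G_n) \leq c^k$ for all $k$ and $n$, i.e.\ the sequence has BAb. This contradicts the previous paragraph, so $(G_n)_{n \geq 1}$ is not expanding.

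There is no genuine obstacle here once Lemma~\ref{lemSolvBab} is in hand; the only point that must be stated with care is the uniformity of $N(\ell,f)$ in $n$, which is precisely the force of the lemma and is what upgrades a bound on each individual $|G_n|$ into a bound on the whole sequence.
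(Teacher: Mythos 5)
Your proposal is correct and is exactly the argument the paper intends: the corollary is stated without proof precisely because it follows from the uniformity in Lemma~\ref{lemSolvBab} (a bound depending only on $\ell$ and $f$ contradicts $|G_n| \to \infty$) together with Theorem~\ref{thLWineq}, which shows expanding implies $ab_k(G_n) \leq c^k$, i.e.\ BAb. Your write-up just makes these two implicit steps explicit.
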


\begin{remark}
In two parts of the
book \cite{KS11}
({\itshape Highly Speculative Conjecture 2} at page 117, and {\itshape Student Research Project 2} at page 119),
it is asked whether the fact that a sequence of finite groups $(G_n)_{n \geq 1}$
is nonexpanding can be purely detected by looking at the property BAb.
This is false, as there are sequences of groups with BAb but extremely fast abelianization growth \cite[Prop. 23]{Sab23}.
The typical example is $(V_n \rtimes \Sym(n))_{n \geq 1}$,
where $V_n \cong (C_{a_n})^{n-1}$ is a deleted permutation module
and $(a_n)_{n \geq 1}$ is a rapidly growing sequence of positive integers. 
\end{remark}

We now move to group actions.
The natural generalization of Theorem \ref{thLWineq} concerns the abelian sections above the stabilizer of a point:

\begin{theorem}[Th. 3 in \cite{Sab22}] \label{thSabIneq}
Let $(G_n \circlearrowright \Omega_n)_{n \geq 1}$ be an expanding sequence of group actions.
For each $n$, let $Y_n < G_n$ be the stabilizer of a point.
Then there exists a constant $c$ such that for every $n$ and $Y_n \leqslant H \leqslant G_n$ we have
$$ |H:H'Y_n| \> \leq \> c^{\, |G_n:H|} . $$
\end{theorem}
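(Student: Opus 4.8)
The plan is to pass from the given Schreier graph on $\Omega_n$ to a closely related one carrying a free action of the finite abelian group $H/H'Y_n$, to recognise the new graph as an abelian covering of a graph with only $|G_n:H|$ vertices, and to exploit that a large abelian cover of a small graph cannot have a spectral gap.

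Fix $n$ and write $G=G_n$, $Y=Y_n$, $S=S_n$; let $d:=\sup_n|S_n|$ and let $\varepsilon>0$ be a uniform lower bound for $h_{ver}(\Sch(G_n\circlearrowright\Omega_n,S_n))$, and identify $\Omega_n$ with $Y\backslash G$. Given $Y\leqslant H\leqslant G$, set $W:=H'Y$; since $H'\trianglelefteq H$ this is a subgroup with $Y\leqslant W\leqslant H$, and $A:=H/W$ is a finite abelian group with $|A|=|H:H'Y_n|$. First I would consider $\Omega':=W\backslash G$ and $\Gamma':=\Sch(G\circlearrowright\Omega',S)$. Because $Y\leqslant W$, the map $Yg\mapsto Wg$ is a surjection of transitive $G$-sets, so $\Gamma'$ is a graph quotient of $\Sch(G\circlearrowright\Omega_n,S)$ with all fibres of equal size; for such quotients $\pi$ one checks $\partial_{ver}(\pi^{-1}X)=\pi^{-1}(\partial_{ver}X)$, so $h_{ver}(\Gamma')\geq h_{ver}(\Sch(G\circlearrowright\Omega_n,S))\geq\varepsilon$ and $\Gamma'$ still has degree $\leq d$. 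Next, for $h\in H$ the rule $Wg\mapsto Whg$ is a well-defined permutation of $\Omega'$ (as $W\trianglelefteq H$), depends only on $hW$, is fixed-point-free for $h\notin W$, and commutes with the right $S$-action; hence $A$ acts freely on $\Gamma'$ by graph automorphisms, with quotient $\bar\Gamma':=\Sch(G\circlearrowright H\backslash G,S)$, a connected graph on $m:=|G:H|$ vertices of degree $\leq d$, and $\Gamma'\to\bar\Gamma'$ is a regular covering with deck group $A$. It now remains to bound $|A|$ by $c^{m}$ for a constant depending only on $\varepsilon$ and $d$.

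For this I would invoke the spectral description of regular abelian covers. Trivialising voltages along a spanning tree of $\bar\Gamma'$, the voltages $\gamma_1,\dots,\gamma_L\in A$ on the remaining edges generate $A$ (otherwise $\Gamma'$ would be disconnected), where $L$ is the first Betti number of $\bar\Gamma'$, so $L\leq dm/2$; and the adjacency spectrum of $\Gamma'$ is the disjoint union over $\chi\in\widehat A$ of the spectra of the Hermitian $m\times m$ twisted matrices $M_\chi$, with $M_{\chi_0}$ the adjacency matrix of $\bar\Gamma'$ at the trivial character $\chi_0$. Converting $h_{ver}(\Gamma')\geq\varepsilon$ together with the degree bound into a spectral gap (discrete Cheeger inequality) gives $\lambda_2(\Gamma')\leq d-\varepsilon'$ for some $\varepsilon'=\varepsilon'(\varepsilon,d)>0$; since the top eigenvalue of $\Gamma'$ is simple and contributed by $\chi_0$, every $M_\chi$ with $\chi\neq\chi_0$ has all eigenvalues $\leq d-\varepsilon'$. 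On the other hand, if some $\chi\neq\chi_0$ satisfied $|\chi(\gamma_i)-1|<\delta:=\varepsilon'/d$ for all $i$, then $\|M_\chi-M_{\chi_0}\|\leq d\max_i|\chi(\gamma_i)-1|<\varepsilon'$, forcing an eigenvalue of $M_\chi$ above $d-\varepsilon'$, a contradiction; hence no such $\chi$ exists. Finally, since $\gamma_1,\dots,\gamma_L$ generate $A$, the homomorphism $\widehat A\to\mathbb T^L$, $\chi\mapsto(\chi(\gamma_1),\dots,\chi(\gamma_L))$, is injective, and its image is a finite subgroup of $\mathbb T^L$ meeting the $\delta$-box around the identity only trivially; a volume-packing argument (pairwise disjoint $(\delta/2)$-boxes) gives $|A|=|\widehat A|\leq(2\pi/\delta)^{L}\leq(2\pi/\delta)^{dm/2}$. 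Taking $c:=(2\pi/\delta)^{d/2}$, which depends only on $\varepsilon$ and $d$, yields $|H:H'Y_n|=|A|\leq c^{\,|G_n:H|}$.

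I expect the main obstacle to be obtaining the exponent \emph{linear} in $|G_n:H|$: a purely isoperimetric argument using word-balls in $A$ loses a logarithmic factor, and it is the spectral/character argument together with the Betti-number bound $L\leq dm/2$ that keeps the exponent linear. The remaining points needing care are the well-definedness and freeness of the $A$-action and the bookkeeping in the spectral decomposition of the cover; note one does not need $\langle S\rangle=G$, only that $\langle S\rangle$ is transitive, which already forces every quotient Schreier graph above to be connected.
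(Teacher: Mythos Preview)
Your argument is correct. Note, however, that the present paper does not itself prove this theorem: it is quoted from \cite{Sab22}, with only the remark that ``the proof of this result is essentially as in Theorem~\ref{thLWineq}, but some effort was made to polish the value of~$c$''. There is therefore no in-paper proof to compare against directly.

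That said, your route is essentially the same argument as in \cite{LW93,Sab22}, repackaged geometrically. Passing to $W\backslash G$ with $W=H'Y_n$, recognising its Schreier graph as a regular abelian cover of $\Sch(G\circlearrowright H\backslash G,S)$ with deck group $A=H/H'Y_n$, and then bounding $|A|$ via the twisted blocks $M_\chi$ and a Dirichlet-type packing of characters in $\mathbb{T}^L$, is exactly the induced-representation argument of Lubotzky--Weiss in disguise: your $M_\chi$ is the matrix of the generators acting on $\mathrm{Ind}_W^G\chi$, and the eigenvalue-perturbation step is the Kazhdan-type inequality $\max_{s\in S}\|s v-v\|\geq\varepsilon'$ applied to the vector coming from the top eigenvector of $M_{\chi_0}$. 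Both approaches reduce to the same core fact---an abelian group on $L$ generators with no nontrivial character $\delta$-close to~$1$ on every generator has order at most $(C/\delta)^L$---with $L$ bounded linearly in $|G:H|$ via Schreier generators (equivalently, the first Betti number of the base, as you phrase it). Your observation that a ball-growth argument would lose a logarithm, and that the character argument is what gives the clean linear exponent, is precisely the ``polishing'' the paper alludes to.

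Two minor points of care: the top eigenvalue of $\Gamma'$ is $|S|$ rather than $d$ (harmless, since $|S|\le d$ and only the gap matters); and the normality of $W=H'Y_n$ in $H$, which you need for the left $A$-action on $W\backslash G$ to be well defined, follows from $H/H'$ being abelian but deserves one explicit sentence.
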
 

The proof of this result is essentially as in Theorem \ref{thLWineq},
but some effort was made to polish the value of $c$.
In \cite{Sab22}, the author used Theorem \ref{thSabIneq} to show that
nilpotent groups of bounded nilpotency class do not produce expander Schreier graphs \cite[Th. 4]{Sab22}.
The key ingredient was the following inequality,
which has independent interest:

\begin{proposition}[Prop. 13 in \cite{Sab22}]
Let $G$ be a $d$-generated nilpotent group of nilpotency class $c$.
If $Y \leqslant G$, then
$$ |G:G'Y| \> \geq \> |G:Y|^{\varepsilon} , $$
where $\varepsilon >0$ only depends on $d$ and $c$.
\end{proposition}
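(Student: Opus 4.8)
\emph{Proof idea.} The plan is to argue by induction on the nilpotency class $c$, at each step peeling off the last term $Z := \gamma_c(G)$ of the lower central series, which is central in $G$, and to reduce the inductive step to a single commutator-calculus estimate on $Z$.

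For $c = 1$ the group $G$ is abelian, $G' = 1$, and the inequality holds with $\varepsilon = 1$. Assume now $c \ge 2$ and that the statement holds in class $c-1$ with some exponent $\varepsilon' = \varepsilon'(d,c-1) > 0$. Since $Z \le G'$ and $Z$ is central, $\bar G := G/Z$ is $d$-generated of class $c-1$; writing $\bar Y$ for the image of $Y$, one has $\bar G' \bar Y = G'Y/Z$, so $|\bar G : \bar G'\bar Y| = |G:G'Y|$, while $|\bar G : \bar Y| = |G : YZ| = |G:Y|\,/\,|Z : Z\cap Y|$. Plugging this into the inductive hypothesis for $\bar G$ gives
$$ |G:Y| \ \le\ |G:G'Y|^{1/\varepsilon'} \cdot |Z : Z\cap Y| , $$
so it remains to bound $|Z : Z\cap Y|$ by a power of $|G:G'Y|$ with exponent depending only on $d$ and $c$.

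For this last point, fix generators $x_1,\dots,x_d$ of $G$ and let $n_i$ be the order of $x_i G'Y$ in the abelian group $G/G'Y$, so that $n_i \le |G:G'Y|$. As $G$ has class $c$, the central subgroup $Z = \gamma_c(G)$ is generated by the at most $d^c$ left-normed weight-$c$ commutators $z = [x_{i_1},\dots,x_{i_c}]$ in the generators. Writing $x_i^{n_i} = w_i y_i$ with $w_i \in G'$ and $y_i \in Y$, and expanding $[x_{i_1}^{n_{i_1}},\dots,x_{i_c}^{n_{i_c}}]$ using that the weight-$c$ commutator map is multilinear modulo $\gamma_{c+1}(G) = 1$, every term of the expansion other than $[y_{i_1},\dots,y_{i_c}]$ has some entry in $G' = \gamma_2(G)$ and hence lies in $\gamma_{c+1}(G) = 1$, while $[y_{i_1},\dots,y_{i_c}] \in Y$ because $Y$ is a subgroup. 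Therefore $z^{\,n_{i_1}\cdots n_{i_c}} = [y_{i_1},\dots,y_{i_c}] \in Z\cap Y$, so the image of $z$ in $Z/(Z\cap Y)$ has order dividing $n_{i_1}\cdots n_{i_c} \le |G:G'Y|^{c}$. Since the abelian group $Z/(Z\cap Y)$ is generated by at most $d^c$ such images, $|Z:Z\cap Y| \le |G:G'Y|^{c\,d^c}$; combined with the displayed bound, this gives the claim with, say, $\varepsilon(d,c) = \big(1/\varepsilon'(d,c-1) + c\,d^c\big)^{-1}$ and $\varepsilon(d,1) = 1$.

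I expect the main obstacle to be the commutator-calculus step: one must check carefully that the left-normed map $(g_1,\dots,g_c)\mapsto[g_1,\dots,g_c]$ is multilinear modulo $\gamma_{c+1}(G)$ and that a single entry taken from $\gamma_2(G)$ already drives a weight-$c$ commutator into $\gamma_{c+1}(G)$. Both are standard consequences of $[\gamma_r,\gamma_s]\le\gamma_{r+s}$, but they are precisely what lets the exponents $n_i$ be orders modulo $G'Y$ rather than modulo $Y$, and what keeps $\varepsilon$ independent of $|G|$. (For definiteness one may assume all indices are finite, as in the applications; otherwise the inequality is read with the convention $\infty \ge \infty$.)
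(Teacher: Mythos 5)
Your argument is correct and complete. Note first that the paper itself does not prove this proposition; it is quoted from [Sab22] (Prop.~13 there), so the only comparison available is with that external source, and your route --- induction on the nilpotency class, passing to $\bar G = G/\gamma_c(G)$, and controlling $|Z:Z\cap Y|$ via the multilinearity of the left-normed weight-$c$ commutator map $(G/G')^c \to \gamma_c(G)$ when $\gamma_{c+1}(G)=1$ --- is the natural argument and, as far as I can tell, essentially the one behind the cited result. All the key steps check out: $G'Y \trianglelefteq G$ so the orders $n_i$ make sense and satisfy $n_i \le |G:G'Y|$; $\bar G'\bar Y = G'Y/Z$ and $|\bar G:\bar Y| = |G:Y|/|Z:Z\cap Y|$; $\gamma_c(G)$ is generated by the at most $d^c$ left-normed weight-$c$ commutators in the generators because $\gamma_{c+1}(G)=1$; and $z^{n_{i_1}\cdots n_{i_c}} = [y_{i_1},\dots,y_{i_c}] \in Z\cap Y$ follows from the well-definedness of the multilinear map modulo $G'$ in each coordinate, giving $|Z:Z\cap Y| \le |G:G'Y|^{c\,d^c}$ and the recursion $\varepsilon(d,c) = \bigl(1/\varepsilon(d,c-1) + c\,d^c\bigr)^{-1}$. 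Two cosmetic points: state the inductive hypothesis for class \emph{at most} $c-1$ (or invoke that your $\varepsilon(d,\cdot)$ is decreasing), since $G/\gamma_c(G)$ may have smaller class; and dispose of the degenerate case $|G:G'Y| = \infty$ explicitly at the start, after which all indices appearing are finite.
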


This inequality fails dramatically in a non-nilpotent group:
if $Y<G$ is a maximal subgroup which is not normal, then $G=G'Y$.

 \subsection{Representation growth} 
 
 Let $Rep_k(G)$ denote the number of irreducible complex representations of $G$ of degree at most $k$.
 If $G$ can be generated by $d$ elements,
 then the two functions $ab_k(G)$ and $Rep_k(G)$ are connected by the inequalities
 \begin{equation} \label{eq2}
 Rep_k(G) \> \geq \> \frac{ab_k(G)}{k} 
 \end{equation}
 and
 \begin{equation} \label{eq1} 
 Rep_k(G) \> \leq \> 
 ( k! \cdot ab_{(k+1)!}(G) )^{O(d) \cdot (\log ab_{(k+1)!} (G) + k \log k)} . 
\end{equation} 
 (See \cite[Sec. 3]{Sab23}.)
 We point out that the bound in (\ref{eq2}) is the best possible.
	
	 \begin{example}
 Let $p$ be a prime and let $G = C_p \wr C_p$.
	It is a routine computation to show that $G$ has $p^2$ representations of degree $1$,
	and $p^{p-1}-1$ representations of degree $p$.
	It follows that
	$$ \frac{p \, Rep_p(G)}{ab_p(G)} = \frac{p (p^{p-1} +p^2 -1)}{p^p} = 1+ p^{3-p} - p^{1-p} . $$
	When $p$ grows, this quantity tends to $1$ from above.
	\end{example} 
 
 An expanding sequence $(G_n)_{n \geq 1}$ requires a bounded number of generators,
so it follows from (\ref{eq2}) and (\ref{eq1}) that $Rep_k(G_n)$ can be bounded by a function on $k$ for all $n$
if and only if $(G_n)_{n \geq 1}$ has BAb.
 The following theorem of de la Harpe, Robertson and Valette
 gives a stronger bound for expanding sequences.

\begin{theorem}[Prop. 4 in \cite{HRV93}] \label{thHRV}
Let $(G_n)_{n \geq 1}$ be an expanding sequence of groups.
Then there exists a constant $c$ such that for every $n$ we have
$$ Rep_k(G_n) \> \leq \> c^{\, k^2} $$
for all $k \geq 1$.
\end{theorem}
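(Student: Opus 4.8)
The plan is to extract from expansion a uniform Kazhdan-type constant attached to the generating sets $(S_n)_{n\ge 1}$, and then to realise the count $Rep_k(G_n)$ as a packing problem in products of the unitary groups $U(j)$, $j\le k$.

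\textit{From expansion to a spectral gap and a Kazhdan constant.} By the equivalence of the various notions of expansion for bounded-degree graphs, there is an $\varepsilon>0$ depending only on the sequence such that, for every $n$, the normalized adjacency operator of $\Cay(G_n,S_n)$ has all its eigenvalues other than the top one $1$ at most $1-\varepsilon$. Every nontrivial irreducible representation $\rho$ of $G_n$ occurs in the regular representation, so the eigenvalues of $\rho(\mu_n)$, where $\mu_n=\tfrac1{|S_n|}\sum_{s\in S_n}s$, all appear in that spectrum; since $\rho$ has no invariant vector, $1$ is not among them, and hence $\langle\rho(\mu_n)v,v\rangle\le 1-\varepsilon$ for every unit vector $v$. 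As $\|\rho(s)v-v\|^2=2-2\,\mathrm{Re}\langle\rho(s)v,v\rangle$ for unit $v$, averaging over $S_n$ gives $\max_{s\in S_n}\|\rho(s)v-v\|^2\ge 2-2\langle\rho(\mu_n)v,v\rangle\ge 2\varepsilon$. Setting $\kappa:=\sqrt{2\varepsilon}$, we conclude (reducing to irreducibles) that every representation $\pi$ of $G_n$ without nonzero invariant vectors satisfies $\max_{s\in S_n}\|\pi(s)v-v\|\ge\kappa\|v\|$ for all $v$.

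\textit{Separating irreducibles of a fixed degree.} Fix $j\ge 1$ and choose, for each isomorphism class of irreducible representation of $G_n$ of degree $j$, a unitary model $\rho\colon G_n\to U(j)$. If $\rho_1\not\cong\rho_2$ are two such, then by Schur's lemma $\rho_1\otimes\overline{\rho_2}$ has no nonzero invariant vector; realising it on $M_j(\mathbb C)$ with the Hilbert--Schmidt norm, so that $s$ acts by $X\mapsto\rho_1(s)X\rho_2(s)^*$, and applying the inequality of the previous step to the unit vector $j^{-1/2}I_j$, we obtain
\[
\kappa\ \le\ \max_{s\in S_n}\big\|\rho_1(s)\rho_2(s)^*-I_j\big\|_{\mathrm{HS}}\big/\sqrt{j}\ =\ \max_{s\in S_n}\|\rho_1(s)-\rho_2(s)\|_{\mathrm{HS}}\big/\sqrt{j},
\]
using that right multiplication by the unitary $\rho_2(s)^*$ preserves the Hilbert--Schmidt norm. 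In particular $\max_{s\in S_n}\|\rho_1(s)-\rho_2(s)\|_{\mathrm{op}}\ge\kappa$, so the chosen models of the degree-$j$ irreducibles form a $\kappa$-separated subset of $U(j)^{S_n}$ for the metric $\max_{s}\|\cdot\|_{\mathrm{op}}$.

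\textit{Packing bound and conclusion.} Let $d:=\sup_n|S_n|<\infty$, which is finite since the sequence is expanding. The operator-norm unit ball of $M_j(\mathbb C)$ is a ball in a real normed space of dimension $2j^2$, hence has a $\tfrac{\kappa}{3}$-net of cardinality at most $C_0^{\,j^2}$ with $C_0=C_0(\kappa)$; therefore $U(j)^{S_n}$ has such a net of cardinality at most $C_0^{\,dj^2}$, and any $\kappa$-separated subset has at most that many points. Consequently $G_n$ has at most $C_0^{\,dj^2}$ irreducible representations of degree $j$, and
\[
Rep_k(G_n)\ \le\ \sum_{j=1}^{k} C_0^{\,dj^2}\ \le\ k\,C_0^{\,dk^2}\ \le\ \big(2C_0^{\,d}\big)^{k^2},
\]
where we used $k\le 2^{k^2}$. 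As $c:=2C_0^{\,d}$ depends only on $\varepsilon$ and $d$, this gives the theorem. The one point requiring genuine care is the separation step: one needs Schur's lemma to rule out invariant vectors in $\rho_1\otimes\overline{\rho_2}$, the right test vector $I_j$, and the degree-by-degree organisation, since $\rho_1(s)$ and $\rho_2(s)$ can only be compared when they have the same size. The passage from expansion to a Kazhdan constant and the volumetric estimate in $U(j)$ are routine.
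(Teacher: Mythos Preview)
The paper does not supply its own proof of this statement; it simply quotes it as Proposition~4 of \cite{HRV93}. Your argument is correct and is essentially the original proof of de la Harpe--Robertson--Valette: pass from expansion to a uniform Kazhdan constant for the pairs $(G_n,S_n)$, use Schur's lemma on $\rho_1\otimes\overline{\rho_2}$ with the test vector $j^{-1/2}I_j$ to show that unitary models of nonisomorphic degree-$j$ irreducibles are $\kappa$-separated in $U(j)^{S_n}$, and finish with a volumetric packing bound in the operator-norm ball of $M_j(\mathbb{C})$. All the steps you flag as delicate (the one-sided spectral gap from the self-adjointness of $\rho(\mu_n)$ when $S_n=S_n^{-1}$, the HS-to-operator norm comparison, the product-metric covering of $U(j)^{S_n}$) are handled correctly.
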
 

   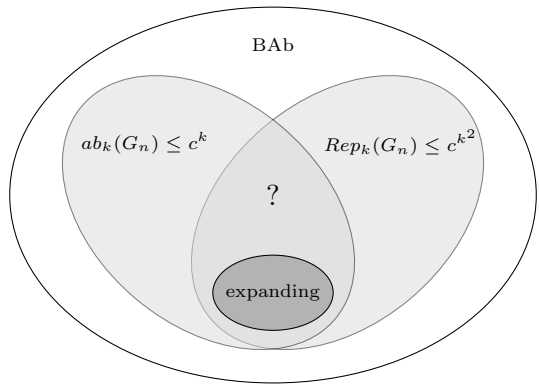
\begin{figure} 
 \centering
 \begin{tikzpicture}[scale=1][thick]
  \draw  (0,0.3) ellipse (3.5cm and 2.5cm);
    \draw[rotate=-50][fill=gray!30, opacity=0.5]  (0.5,0.7) ellipse (1.5cm and 2.2cm);
     \draw[rotate=50][fill=gray!30, opacity=0.5]  (-0.5,0.7) ellipse (1.5cm and 2.2cm);
  \draw [fill=gray!60] (0,-1) ellipse (0.8cm and 0.5cm);
   \node at (0,-1) {\tiny expanding};
    \node at (0,2.3) {\tiny BAb};
    \node at (-1.7,1) {\tiny $ab_k(G_n) \leq c^k$};
     \node at (1.7,1) {\tiny $Rep_k(G_n) \leq c^{k^2}$};
     \node at (0,0.3) {?};
    \end{tikzpicture} 
   \caption{The relations between the group properties BAb,
   expanding, and the conditions in \cite{LW93} and \cite{HRV93}.}
    \end{figure}

In the case of a group action $G \circlearrowright \Omega$,
we count the irreducible complex representations 
of degree at most $k$ appearing in the natural permutation representation.
Equivalently, we count the representations of degree at most $k$ lying above the stabilizer of a point.
We denote this quantity by $Rep_k(G \circlearrowright \Omega)$.

\begin{theorem}[Prop. 2.8 in \cite{Pis17}] \label{thPis}
Let $(G_n \circlearrowright \Omega_n)_{n \geq 1}$ be an expanding sequence of group actions.
Then there exists a constant $c$ such that for every $n$ we have
$$ Rep_k(G_n \circlearrowright \Omega_n) \> \leq \> c^{\, k^2} $$
for all $k \geq 1$.
\end{theorem}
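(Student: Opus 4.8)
The plan is to deduce Theorem~\ref{thPis} from the corresponding statement for Cayley graphs (Theorem~\ref{thHRV}) by a transfer argument, exploiting the remark that an expanding sequence of group actions can be generated by a bounded number of elements \emph{inside a transitive subgroup}. Concretely, suppose $(G_n \circlearrowright \Omega_n)_{n \geq 1}$ is expanding with symmetric sets $S_n$ of size at most $d$. Replacing $G_n$ by $\langle S_n \rangle$ we may assume the action is transitive and $G_n = \langle S_n \rangle$; the point stabilizers $Y_n$ then have index $|\Omega_n| \to \infty$. The permutation representation of $G_n$ on $\Omega_n$ is $\mathrm{Ind}_{Y_n}^{G_n} \mathbf{1}$, so by Frobenius reciprocity an irreducible $\rho$ of $G_n$ appears in it precisely when $\rho$ has a nonzero $Y_n$-fixed vector, i.e.\ when $\rho$ ``lies above $Y_n$''. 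Thus $Rep_k(G_n \circlearrowright \Omega_n)$ counts irreducibles of $G_n$ of degree $\leq k$ with a $Y_n$-fixed vector.

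The core step is an averaging/convexity estimate relating expansion of the Schreier graph to a spectral gap for the permutation representation, exactly as in the Cayley case. For $\rho$ an irreducible constituent of the permutation representation, write $A_{S_n}(\rho) = \sum_{s \in S_n} \rho(s)$; expansion of $\Sch(G_n \circlearrowright \Omega_n, S_n)$ is equivalent (up to the standard Cheeger/Alon--Milman comparisons, valid in bounded degree) to a uniform bound $\|A_{S_n}(\rho)\| \leq d - \delta$ on every \emph{nontrivial} constituent $\rho$, for some fixed $\delta > 0$. Now if $\rho$ is any irreducible with $\dim \rho = m \leq k$ and a $Y_n$-fixed unit vector $v$, the operator $A_{S_n}(\rho)$ acts on an $m$-dimensional space, so a counting-of-walks argument (bounding $\mathrm{tr}\,(A_{S_n}(\rho)^{2\ell})$ from below by $\langle A_{S_n}(\rho)^{2\ell} v, v\rangle$, which counts closed $S_n$-walks in the Schreier graph returning a positive proportion of the mass) forces many distinct such $\rho$ to pile up near the top of the spectrum; pigeonholing on the number of closed walks of length $\sim k^2$ in a $d$-regular graph, which is at most $d^{O(k^2)}$, yields $Rep_k(G_n \circlearrowright \Omega_n) \leq c^{k^2}$. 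Alternatively — and this is likely the cleanest route — one observes that $\bigoplus \rho$ over the distinct irreducible constituents of degree $\leq k$ embeds into the permutation representation, and then runs verbatim the de~la~Harpe--Robertson--Valette argument of Theorem~\ref{thHRV} with the regular representation replaced by $\mathrm{Ind}_{Y_n}^{G_n}\mathbf 1$, using only that the latter has a uniform spectral gap when restricted to its nontrivial part.

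The main obstacle I expect is purely bookkeeping rather than conceptual: making sure the passage from the vertex-isoperimetric definition of expander (as given in Section~\ref{sec1.1}) to the operator-norm bound on $A_{S_n}(\rho)$ is uniform in $n$ and does not secretly depend on $|\Omega_n|$, and handling the reduction to a transitive $d$-generated subgroup cleanly (one must check that a subset producing an expander Schreier graph still generates a transitive subgroup, which is immediate since the Schreier graph is connected). One should also be slightly careful that the trivial representation is the \emph{only} constituent we must exclude — this is exactly transitivity of the action — and that lazy-walk or bipartiteness issues are absorbed by the standard trick of passing to $S_n \cup \{1\}$, which changes $d$ and $\delta$ by bounded amounts. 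Once these normalizations are fixed, the quantitative heart is a one-line volume bound: the number of distinct irreducibles of degree $\leq k$ that can simultaneously have eigenvalue of $A_{S_n}$ within $\delta$ of $d$ is controlled by $\mathrm{tr}$ of a high power of the adjacency operator on $\Omega_n$, hence by $d^{O(k^2)}$, giving the claimed constant $c = c(d,\delta)$.
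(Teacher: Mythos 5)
There is a genuine gap, and it sits exactly at the quantitative heart of the statement. First, note that the paper does not prove Theorem~\ref{thPis} at all: it is quoted from \cite{Pis17}, so what your sketch must be measured against is the actual content of Pisier's argument, namely how one counts the constituents. Your reductions (passing to $\langle S_n\rangle$, Frobenius reciprocity identifying $Rep_k(G_n\circlearrowright\Omega_n)$ with irreducibles having a $Y_n$-fixed vector, and the translation of vertex expansion into a uniform bound $\|\sum_{s\in S_n}\rho(s)\|\leq d-\delta$ on every nontrivial constituent, modulo the laziness caveat) are fine. But neither of your two proposed counting mechanisms works. The walk-counting/pigeonhole route is incoherent: the spectral gap pushes every nontrivial constituent \emph{away} from the top of the spectrum, so nothing ``piles up near the top''; moreover closed-walk counts do not see individual constituents with weight independent of $n$ --- based at a fixed vertex, the $\rho$-isotypic component of $\delta_\omega$ has squared norm $\dim\rho\cdot\dim\rho^{Y_n}/|\Omega_n|\to 0$, and there is no lower bound on $\langle A(\rho)^{2\ell}v,v\rangle$ anyway since the spectrum of $A(\rho)$ may sit near $0$. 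Knowing only an upper bound on the norm of each constituent cannot, by itself, bound the \emph{number} of distinct constituents of degree $\leq k$ independently of $n$.

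The ``cleanest route'' also fails as stated, and this is the real missing idea. The proof of Theorem~\ref{thHRV} in \cite{HRV93} is a separation-plus-packing argument: two inequivalent irreducibles $\rho\not\cong\sigma$ of degree $\leq k$, both with a spectral gap, must be $\delta(\varepsilon,d)$-separated as $d$-tuples of unitaries up to conjugation, because an almost-intertwiner gives an almost-invariant vector for $\rho\otimes\bar\sigma$, which has no invariant vector; then the metric entropy of $U(m)^d$, $m\leq k$, gives the $c^{k^2}$ bound. The step that does \emph{not} transfer verbatim is the invocation of a spectral gap for $\rho\otimes\bar\sigma$: in the Cayley case this is legitimate because every irreducible without invariant vectors embeds in the regular representation, which the expander hypothesis controls; in the Schreier case the hypothesis only gives a gap for constituents of $\mathrm{Ind}_{Y_n}^{G_n}\mathbf{1}$, and $\rho\otimes\bar\sigma$ (or even $\rho\otimes\bar\rho$) need not appear there --- indeed expansion of an action on points does not in general control the action on pairs. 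Bridging precisely this point is the nontrivial content of \cite[Prop.~2.8]{Pis17}, so asserting that the de la Harpe--Robertson--Valette argument runs ``verbatim, using only that $\mathrm{Ind}_{Y_n}^{G_n}\mathbf 1$ has a uniform gap on its nontrivial part'' assumes the conclusion of the hard step rather than proving it. (A secondary point: even if you obtained separation at a scale decaying polynomially in $k$, the packing count would give $c^{k^2\log k}$ rather than $c^{k^2}$, so the separation must be uniform in $k$ as well.)
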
 

We now make a remark on the sharpness of these theorems.

The estimate in Theorem \ref{thLWineq} (and so in Theorem \ref{thSabIneq}) is the best possible.
Let $(H_n)_{n \geq 1}$ be an expanding sequence of groups and fix a prime $p$ not dividing $|H_n|$ for any $n$.
Let $G_n = \mathbb{F}_p[H_n] \rtimes H_n$.
By \cite[Th. 5.13]{LZ03} we have that $(G_n)_{n \geq 1}$ is expanding,
and, whenever $k=|H_n|$, we have $ab_k(G_n) = p^k$.

The estimate in Theorem \ref{thPis} is also the best possible \cite[Th. 2.11]{Pis17},
but the sharpness of Theorem \ref{thHRV} remains unknown (see Question \ref{questW}).

\vspace{0.1cm}
\section{Amenable actions and Theorems \ref{thAmenableSG} and \ref{thMain}} \label{sec3}

In this section we prove Theorems \ref{thAmenableSG} and \ref{thMain}.

 \subsection{Amenable groups and actions} 

As well as with expander graphs,
the notion of amenable group can be defined in several equivalent ways.
The following is a more general definition for group actions (see \cite[pag. 434]{Bar18}).

  \begin{definition}
 The action of a group $G$ on a set $\Omega$ is an {\itshape amenable action}
 if there exists a $G$-invariant mean on the subsets of $\Omega$.
  \end{definition} 
  
  Every action on a finite set is amenable, by taking the counting measure.
  A group is called an {\itshape amenable group} if its own action by multiplication is amenable.
  It is relevant to say that all abelian groups are amenable,
and that extensions of amenable groups are amenable.
In particular, all solvable groups are amenable.
 The following observation is crucial for our purposes:
 
  \begin{proposition} \label{propAmenable}
 If $G$ is amenable as a group, then every action of $G$ on every set is amenable.
  \end{proposition}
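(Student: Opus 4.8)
The plan is to deduce this from the definition of an amenable group together with a standard push-forward construction for means. Suppose $G$ is amenable as a group, so there is a $G$-invariant mean $\mu$ on the subsets of $G$ (equivalently, a left-invariant finitely additive probability measure, or a left-invariant positive normalized linear functional on $\ell^\infty(G)$). Let $G$ act on an arbitrary set $\Omega$; I want to produce a $G$-invariant mean $\nu$ on the subsets of $\Omega$.

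First I would fix a base point $\omega_0 \in \Omega$ and use the orbit map. Actually, to cover the case where the action is not transitive, the cleanest route is to pick, using the axiom of choice, one representative $\omega_i$ in each $G$-orbit $\Omega_i$; for the transitive piece containing $\omega_i$, the orbit map $g \mapsto g \cdot \omega_i$ is a surjection $\pi_i \colon G \to \Omega_i$, and I push $\mu$ forward along it: for $A \subseteq \Omega_i$ set $\nu_i(A) = \mu(\pi_i^{-1}(A))$. Then I would check that $\nu_i$ is a finitely additive probability measure on subsets of $\Omega_i$ (preimages commute with unions, intersections, complements, and $\pi_i^{-1}(\Omega_i) = G$), and that it is $G$-invariant: for $h \in G$ and $A \subseteq \Omega_i$, one has $\pi_i^{-1}(h \cdot A) = h \cdot \pi_i^{-1}(A)$ because $\pi_i$ is $G$-equivariant, so $\nu_i(h \cdot A) = \mu(h \cdot \pi_i^{-1}(A)) = \mu(\pi_i^{-1}(A)) = \nu_i(A)$ by left-invariance of $\mu$. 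Then I would assemble a global mean on $\Omega$: since the orbits partition $\Omega$, every $A \subseteq \Omega$ decomposes as a disjoint union $A = \bigsqcup_i (A \cap \Omega_i)$, and I define $\nu(A)$ by combining the $\nu_i$. If there are finitely many orbits this is just a convex combination; for infinitely many orbits I would instead push forward along a single orbit map after first reducing to the transitive case, noting that it suffices to produce a $G$-invariant mean on one orbit — but to keep things uniform I would simply apply the invariant mean $\mu$ on $G$ itself to the function $i \mapsto \nu_i(A \cap \Omega_i)$ viewed through a section $\Omega \to G$, or more simply invoke that a $G$-invariant mean on any transitive action pulls back, and an arbitrary action always has a fixed point at the level of means because means on subsets of $\Omega$ form a $G$-invariant compact convex set on which $G$ acts by affine maps — but $G$ amenable means every affine action on a compact convex set has a fixed point, which is exactly the conclusion.

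In fact the quickest clean argument is the last one: the space $M(\Omega)$ of means on the subsets of $\Omega$ (i.e.\ finitely additive probability measures, equivalently states on $\ell^\infty(\Omega)$) is a nonempty convex set, compact in the weak-$*$ topology, and $G$ acts on it by affine homeomorphisms via $(g \cdot \nu)(A) = \nu(g^{-1} \cdot A)$. Amenability of $G$ is equivalent to the fixed-point property that every affine action of $G$ on a nonempty compact convex subset of a locally convex space has a fixed point. Applying this to $M(\Omega)$ gives a $G$-fixed mean, which is precisely a $G$-invariant mean on $\Omega$, so the action is amenable.

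I expect the main obstacle to be purely expository: deciding which of the several equivalent formulations of ``amenable group'' to take as the starting point, and then citing the matching equivalence from \cite{Bar18} (the fixed-point characterization, or the existence of an invariant mean on $\ell^\infty(G)$). The push-forward argument is entirely routine once equivariance of the orbit map is noted; the only mild subtlety is the non-transitive case, which is handled either by the orbit decomposition or, more elegantly, by bypassing it entirely with the fixed-point formulation applied directly to $M(\Omega)$. I would present the fixed-point version as the main proof and perhaps remark that it specializes the well-known chain ``amenable $\Rightarrow$ every action amenable'' recorded in \cite[Prop.\ 2.12]{Bar18}.
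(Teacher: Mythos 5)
The paper does not actually prove this statement: it simply cites \cite[Prop.~2.12]{Bar18}. Your argument is a correct, self-contained proof of that cited fact, so the comparison is really between your proof and the standard one you would find in Bartholdi. Your second argument (the fixed-point one) is clean and complete as stated: $M(\Omega)$, the set of means on subsets of $\Omega$, is a nonempty weak-$*$ compact convex subset of $\ell^\infty(\Omega)^*$ on which $G$ acts by affine homeomorphisms, and Day's fixed-point characterization of amenability hands you an invariant mean directly; this version needs no case analysis at all. Your first argument (push-forward along the orbit map) is also correct and is the more elementary route, but the passage where you worry about assembling the $\nu_i$ over infinitely many orbits is an unnecessary detour: a $G$-invariant mean on $\Omega$ need not charge every orbit, so it suffices to fix a single point $\omega_0 \in \Omega$, push the left-invariant mean $\mu$ on $G$ forward along $\pi\colon g \mapsto g\cdot\omega_0$, and set $\nu(A) = \mu(\pi^{-1}(A))$ for all $A \subseteq \Omega$; equivariance of $\pi$ gives $G$-invariance of $\nu$, and the non-transitive case never arises as a separate case. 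Either of your two arguments, trimmed of the orbit-decomposition digression, would serve as a complete proof where the paper instead defers to the reference.
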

  \begin{proof}
  See \cite[Prop. 2.12]{Bar18}.
   \end{proof}

In the context of Cayley graphs,
it is useful to consider a combinatorial characterization of amenability which is due to F\o lner \cite{Fol55}.
When adapted to group actions, it takes the following form:
  
  \begin{lemma}[F\o lner condition for group actions] \label{lemFol}
   Let the action of the group $G$ on $\Omega$ be amenable.
   Then for every finite subset $S \subseteq G$
  there exists a sequence of finite subsets $(X_i)_{i \geq 1}$ of $\Omega$
  such that
  $$ \frac{|X_i S \setminus X_i|}{|X_i|} \> \to \> 0 $$
  when $i \to \infty$.
  \end{lemma}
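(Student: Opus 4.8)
The plan is to deduce the F\o lner condition for amenable actions from the classical F\o lner theorem for amenable groups, using Proposition~\ref{propAmenable} only through the mean and a standard convexity/averaging argument, but in fact the cleanest route is to mimic the group-theoretic proof directly in the action setting. Concretely, given an amenable action of $G$ on $\Omega$ with $G$-invariant mean $m$, and a finite symmetric set $S \subseteq G$, I would first approximate the mean $m$ by a net of finitely supported probability measures $(\mu_\alpha)$ on $\Omega$ (viewing means as elements of the unit ball of $\ell^\infty(\Omega)^*$ and using weak-$*$ density of $\ell^1(\Omega)$), so that for each fixed $s \in S$ one has $\|s \cdot \mu_\alpha - \mu_\alpha\|_1 \to 0$. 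Here $s \cdot \mu$ denotes the pushforward of $\mu$ under the action of $s$, and the invariance of $m$ is what forces this convergence after passing to the net. This is the standard passage from an invariant mean to an approximately invariant sequence of $\ell^1$-functions.

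Next I would convert the approximately invariant probability measures into approximately invariant finite subsets via the classical ``layer cake'' / co-area argument. For a function $f \in \ell^1(\Omega)$ with $f \geq 0$ and for $t > 0$, let $X_t = \{\omega \in \Omega : f(\omega) > t\}$; then $\sum_{\omega} |f(\omega^s) - f(\omega)| = \int_0^\infty |X_t \triangle X_t^{s^{-1}}|\, dt$, or more precisely one gets $\|s\cdot f - f\|_1 = \int_0^\infty |X_t S \text{-discrepancy}|\,dt$ after summing over $s \in S$. Averaging over $s \in S$ and dividing by $\|f\|_1 = \int_0^\infty |X_t|\,dt$, one finds some threshold $t$ for which $\sum_{s \in S}|X_t^{s^{-1}} \setminus X_t| \leq \varepsilon |X_t|$, hence $|X_t S \setminus X_t| \leq |S| \varepsilon |X_t|$, which is the desired estimate with $\varepsilon$ replaced by $|S|\varepsilon$. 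Applying this with $\varepsilon = 1/i$ and $f = \mu_{\alpha(i)}$ for a suitably chosen cofinal sequence produces the sets $X_i$.

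The main obstacle is the first step: passing from the abstract $G$-invariant mean to approximately invariant $\ell^1$-functions. Means on $\ell^\infty(\Omega)$ are only finitely additive, so one cannot simply ``integrate'' them against the action; the correct statement is that the set of $G$-invariant means is the weak-$*$ closure of the $G$-invariant-in-the-limit directed family of probability densities, and extracting the $\ell^1$-approximation requires Mazur's lemma (or the Namioka/Day convexity trick) to replace weak convergence of $s\cdot\mu_\alpha - \mu_\alpha$ to $0$ in $\ell^1$ by norm convergence of convex combinations. Once that is in place, the co-area argument is purely formal. I should note that for the application in this paper one only ever uses Lemma~\ref{lemFol} when $\Omega$ is finite (via Proposition~\ref{propAmenable} applied to an infinite amenable $G$ acting on disjoint unions), so in principle one could also cite the existing literature; but giving the self-contained two-step argument above keeps the paper's promise of an elementary treatment and costs very little.
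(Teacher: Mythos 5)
Your argument is essentially correct, but note that the paper does not prove this lemma at all: its ``proof'' is a one-line citation of Bartholdi \cite[Th.~3.23, (1)$\Rightarrow$(5)]{Bar18}, and what you have written out is precisely the standard proof of that implication (invariant mean $\to$ approximately invariant $\ell^1$-densities $\to$ F\o lner sets), transplanted verbatim from the group case to the action case. Both steps of your sketch are sound. In the first step the one point that needs care is that the weak-to-norm upgrade must be performed \emph{simultaneously} for all $s\in S$: one embeds the net $\mu_\alpha$ diagonally via $\mu \mapsto (s\cdot\mu-\mu)_{s\in S}$ into $\bigoplus_{s\in S}\ell^1(\Omega)$, observes that $0$ lies in the weak closure of the image (because weak-$*$ convergence $\mu_\alpha\to m$ in $\ell^\infty(\Omega)^*$ gives weak convergence $s\cdot\mu_\alpha-\mu_\alpha\to 0$ in $\ell^1$ for each $s$), and then applies Mazur's lemma once in the direct sum to get a single convex combination $\nu$ with $\sum_{s\in S}\|s\cdot\nu-\nu\|_1<\varepsilon$; you flag exactly this issue, so the gap is acknowledged rather than present. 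The layer-cake step is then purely formal as you say: $\|s\cdot f-f\|_1=\int_0^\infty|X_t\,\triangle\,X_t^{\,s^{\pm1}}|\,dt$ and $\|f\|_1=\int_0^\infty|X_t|\,dt$ force some level set $X_t$ (finite, since $\nu$ is finitely supported, and nonempty since $\|\nu\|_1=1$) with $|X_tS\setminus X_t|\le |S|\varepsilon\,|X_t|$. Your closing remark is also apt: in this paper the lemma is only ever applied through Proposition~\ref{propAmenable} to the disjoint union $\bigsqcup_n\Omega_n$, so citing the reference is defensible, but your two-step argument is a legitimate self-contained alternative and matches the proof in the cited source in spirit.
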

  \begin{proof}
  See \cite[Th. 3.23 (1)$\Rightarrow$(5)]{Bar18}.
  \end{proof}
  
  The converse of Lemma \ref{lemFol} also holds, but we do not need it.
  The analogy between the F\o lner condition and the definition of isoperimetric number is clear.

  \subsection{F\o lner condition on the disjoint union} 
  
  In the hypotheses of Theorem \ref{thAmenableSG} we have a sequence of finite sets $(\Omega_n)_{n \geq 1}$,
  while the notion of amenable action is only relevant for infinite sets.
  In the Cayley graph case, Lubotzky and Weiss \cite[Th. 3.1]{LW93} overcome this apparent obstruction
  by defining suitable functions on the finite quotients of $G$,
  and then use properties of these functions and their norms.
  We will use a more direct approach, which immediately generalizes to actions:
  we act on the disjoint union of the $\Omega_n$'s and use the following lemma.
 
  \begin{lemma} \label{lemTec}
  Let $(m_i)_{i \geq 1}$ be positive integers, and let
  $$ (a_{i,j})_{i \geq 1, 1 \leq j \leq m_i}
  \hspace{1cm} \mbox{and} \hspace{1cm}
  (b_{i,j})_{i \geq 1, 1 \leq j \leq m_i} $$
  be non-negative integers, $b_{i,j} > 0$.
  Suppose that
  $$ \lim_{i \to \infty} \> \frac{\sum_{j=1}^{m_i} a_{i,j}}{\sum_{j=1}^{m_i} b_{i,j}} \> = \> 0 . $$
  Then there exists a sequence $(k_i)_{i \geq 1}$ such that
  $$ \lim_{i \to \infty} \> \frac{a_{i,k_i}}{b_{i,k_i}} \> = \> 0 . $$
  \end{lemma}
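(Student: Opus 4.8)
The plan is to prove the contrapositive in a quantitative form: if for every choice of index sequence $(k_i)_{i \geq 1}$ the ratio $a_{i,k_i}/b_{i,k_i}$ fails to tend to $0$, then the ratio of the aggregate sums is bounded away from $0$ along a subsequence, contradicting the hypothesis. So first I would negate the desired conclusion and unwind the quantifiers: there is an $\varepsilon > 0$ such that for \emph{every} sequence $(k_i)$, infinitely often $a_{i,k_i}/b_{i,k_i} \geq \varepsilon$. The key observation is that the natural choice $k_i := \arg\min_j a_{i,j}/b_{i,j}$ (pick any minimizing $j$, breaking ties arbitrarily) makes $a_{i,k_i}/b_{i,k_i}$ as small as possible for each fixed $i$; applying the negated statement to \emph{this} sequence yields an infinite set $I$ of indices $i$ with $\min_j a_{i,j}/b_{i,j} \geq \varepsilon$.

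Next I would convert this pointwise-in-$j$ lower bound into a bound on the sums. For each $i \in I$ and each $j$ we have $a_{i,j} \geq \varepsilon\, b_{i,j}$ (since $b_{i,j} > 0$), so summing over $j$ from $1$ to $m_i$ gives $\sum_j a_{i,j} \geq \varepsilon \sum_j b_{i,j}$, i.e.
$$ \frac{\sum_{j=1}^{m_i} a_{i,j}}{\sum_{j=1}^{m_i} b_{i,j}} \> \geq \> \varepsilon $$
for all $i \in I$. Since $I$ is infinite, this prevents the hypothesized limit $\sum_j a_{i,j} / \sum_j b_{i,j} \to 0$, the desired contradiction. Hence the conclusion holds with the explicit witness $k_i = \arg\min_j a_{i,j}/b_{i,j}$.

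I do not expect any serious obstacle here — the lemma is genuinely elementary. The one point requiring a little care is that the quantifier structure is handled correctly: the hypothesis gives a single convergence statement for the sums, while the conclusion is an existential statement over sequences, and the argument above produces an \emph{explicit} such sequence rather than relying on a diagonal extraction, which is cleanest. A secondary remark is that there is no need to pass to a subsequence in $i$ to define $k_i$: the minimizer is defined for every $i$, and it is the \emph{value} of the resulting ratio that we must control, which the argument does along the infinite set $I$ produced by the negation. Finally, I would note in passing (though it is not needed for the proof) that the same argument shows more: one can take $k_i$ to be any sequence with $a_{i,k_i}/b_{i,k_i} \leq \sum_j a_{i,j}/\sum_j b_{i,j}$, and such $k_i$ always exists since a weighted average is at least its minimum.
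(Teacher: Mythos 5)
Your proof is correct and essentially identical to the paper's: both arguments take $k_i$ to be a minimizer of $a_{i,j}/b_{i,j}$ over $j$ and then use the fact that a ratio of sums is at least the minimum of the termwise ratios (the paper invokes the mediant inequality, you rederive its left half by summing $a_{i,j} \geq \varepsilon\, b_{i,j}$ over $j$). One small slip in phrasing: the negation of the conclusion is ``for every sequence $(k_i)$ there exists $\varepsilon>0$\ldots'', not ``there exists $\varepsilon>0$ such that for every sequence\ldots'', but this is harmless since you only apply the negation to the single argmin sequence.
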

  \begin{proof}
   By contradiction, suppose that
   $$ \limsup_{i \to \infty} \> \min_{j} \left( \frac{a_{i,j}}{b_{i,j}} \right) \> = \> \delta >0 . $$
   This means that there exist infinitely many $i$ such that
   \begin{equation} \label{eqTec1}
   \frac{a_{i,j}}{b_{i,j}} \geq \frac{\delta}{2}
   \end{equation}
   holds for all $1 \leq j \leq m_i$.
   Fix $i$ such that (\ref{eqTec1}) is true,
   and let $x_j=a_{i,j}$ and $y_j=b_{i,j}$, $n=m_i$.
   It is easy to see that
   \begin{equation*}  \label{eqTec2}
   \min_{1 \leq j \leq n} \left( \frac{x_j}{y_j} \right) \> \leq \> \frac{\sum_{j=1}^n x_j}{\sum_{j=1}^n y_j} \> 
   \leq \>  \max_{1 \leq j \leq n} \left( \frac{x_j}{y_j} \right) . 
   \end{equation*}
   From the left side and (\ref{eqTec1}), we obtain that
  $$ \frac{\sum_{j = 1}^{m_i} a_{i,j}}{\sum_{j = 1}^{m_i} b_{i,j}} \> \geq \> \frac{\delta}{2} $$
   holds for infinitely many $i$.
   This gives the desired contradiction.
  \end{proof}
  
  \begin{proof}[Proof of Theorem \ref{thAmenableSG}]
  Let $G$ be an amenable group acting on $\Omega_n$ for every $n$,
  and let $S$ be a finite symmetric subset of $G$.
  Then $G$ acts naturally on the disjoint union $\Omega= \bigsqcup_{n \geq 1} \Omega_n$.
  By Proposition \ref{propAmenable}, the action of $G$ on $\Omega$ is amenable.
  Therefore, by Lemma \ref{lemFol} there exists a sequence of finite subsets $(X_i)_{i \geq 1}$ of $\Omega$,
  such that $|X_iS \setminus X_i|/|X_i| \to 0$ when $i \to \infty$.
  Since the $X_i$'s are finite,
  for each $i$ there exists $m_i$ such that $X_i \subseteq \bigsqcup_{i = 1}^{m_i} \Omega_i$.
  Moreover, observe that $\Omega_i S = \Omega_i$ for each $i$.
  Hence we can write
  $$ \frac{|X_iS \setminus X_i|}{|X_i|} \> = \> 
  \frac{\sum_{j=1}^{m_i} |(X_i \cap \Omega_j)S \setminus (X_i \cap \Omega_j)|}{\sum_{j=1}^{m_i} |X_i \cap \Omega_j|} . $$
   We apply Lemma \ref{lemTec} with
   $$ a_{i,j} = |(X_i \cap \Omega_j)S \setminus (X_i \cap \Omega_j)|
   \hspace{1cm} \mbox{and} \hspace{1cm}
   b_{i,j} =|X_i \cap \Omega_{j}| ,$$
   avoiding the $i,j$ where $b_{i,j}=0$.
   Let $(k_i)_{i \geq 1}$ be the sequence obtained in this way,
   and let $Y_i =X_i \cap \Omega_{k_i} \subseteq \Omega_{k_i}$.
  By the definition of isoperimetric number, we have
   $$ h_{ver}( \, \Sch(G \circlearrowright \Omega_{k_i},S) \, ) \leq \frac{|Y_iS \setminus Y_i|}{|Y_i|}
   \to 0 $$
   when $i \to \infty$.
   Thus $(\Sch(G \circlearrowright \Omega_n,S))_{n \geq 1}$ is not a sequence of expanders.
  \end{proof}

  \subsection{Solvable finite groups} 
  
  The proof of Theorem \ref{thMain} is as in \cite[Cor. 3.3]{LW93},
  just using Theorem \ref{thAmenableSG} as base stone.
  If $F_d$ denotes the free group on $d$ generators,
  then the {\itshape free solvable group} of derived length $\ell$ is defined by
  $$ L_{d,\ell} \> := \> F_d/F_d^{(\ell)} , $$
  where $F_d^{(\ell)}$ denotes the $\ell$-term in the derived series of $F_d = F_d^{(0)}$.
  It is clear that $L_{d,\ell}$ is solvable (of derived length $\ell$) and so amenable.
  Let $x_1,\ldots,x_d \in L_{d,\ell}$ be projections of the free generators.
  The universal property of $L_{d,\ell}$ is the following:
  for any solvable group $G=\langle g_1,\ldots,g_d \rangle$ of derived length at most $\ell$,
  there exists a homomorphism $\pi \colon L_{d,\ell} \to G$ such that $\pi(x_i) = g_i$ for all $i=1,\ldots,d$.
  
  \begin{proof}[Proof of Theorem \ref{thMain}]
  Let $(G_n)_{n \geq 1}$ be a sequence of solvable groups of derived length at most $\ell$,
  and let $(S_n)_{n \geq 1}$ be a sequence of symmetric subsets of cardinality at most $d$.
  Consider $L_{d,\ell}$,
  and let $S=\{ x_1,\ldots,x_d\} \subseteq L_{d,\ell}$ be a set of generators coming from the free generators of $F_d$.
  For each $n$ there exists $\pi_n \colon L_{d,\ell}  \to G_n$ such that $\pi_n(S)=S_n$.
  If $L_{d,\ell}$ acts on $\Omega_n$ with the action induced from the quotient $G_n$,
  we have
  $$ \Sch(G_n \circlearrowright \Omega_n,S_n) \> = \> \Sch(G_n \circlearrowright \Omega_n,\pi_n(S)) \> \cong \>
  \Sch(L_{d,\ell} \circlearrowright \Omega_n,S) . $$
  The proof follows from Theorem \ref{thAmenableSG}.
  \end{proof}

\section{Open problems} 

Question \ref{quest1} highlights the lack of tools preventing expansion in Cayley graphs.
Moreover, there are a few natural questions arising from Theorems \ref{thLWineq} and \ref{thHRV}.

\begin{question} \label{questRel}
Let $(G_n)_{n \geq 1}$ be a sequence of finite groups that are all $d$-generated for some fixed $d \geq 2$.
Explore the relation between the two properties $ab_k(G_n) \leq (c_1)^k$ and $Rep_k(G_n) \leq (c_2)^{k^2}$
(for possibly distinct constants $c_1$ and $c_2$).
Does one condition imply the other?
\end{question}

Question \ref{questRel} is related to the following:

\begin{question} \label{questRepAb}
Improve (\ref{eq1}), i.e. find a better upper bound for $Rep_k(G)$ in terms of $ab_k(G)$.
\end{question}

We also have

\begin{question}[Wigderson \cite{Pis17}] \label{questW}
Is the $c^{\, k^2}$ bound in Theorem \ref{thHRV} the best possible?
\end{question}

For what concerns specific examples,
it is plenty of sequences of finite groups for which it is not known whether they are expanding.
We say that a finite group is {\itshape anabelian} if it has only nonabelian composition factors.

\begin{question} \label{questAnab}
Let $(G_n)_{n \geq 1}$ be a sequence of anabelian groups that are all $d$-generated for some fixed $d \geq 2$.
Is $(G_n)_{n \geq 1}$ an expanding sequence of groups?
\end{question}

The abelianization growth and the representation growth of anabelian groups
are much slower than what is required by Theorems \ref{thLWineq} and \ref{thHRV}:
by \cite[Th. 3]{Sab23} we have $ab_k(G_n) \leq k^{O(1)}$ for all $n$,
while by \cite[Th. 4]{Sab23} we have $Rep_k(G_n) \leq (k!)^{O(1)}$ for all $n$.
In particular, a negative answer to Question \ref{questAnab} would also answer Question \ref{quest1}.
The following is a pretty example of anabelian sequence:

\begin{question} \label{questPSL}
Let $p$ be a prime and let $G_p = \PSL_2(\mathbb{F}_p)^p$ be the direct product of $p$ copies of $\PSL_2(\mathbb{F}_p)$.
Is $(G_p)_{p \geq 5}$ an expanding sequence of groups?
\end{question}

We observe that $G_p$ can be generated by a bounded number of elements for all $p$ \cite{Wie74}.
For comparison, Bartholdi and Kassabov \cite{BK25} studied sequences of groups of type 
$(\Alt(n)^{f(n)})_{n \geq 5}$, and proved that they are expanding if $f(n) \leq \exp((\log n)^{O(1)})$.


\vspace{0.1cm}

   \vspace{0.5cm}

\end{document}